\theoremstyle{plain}
\newtheorem{theorem}{Theorem} 
\newtheorem{lemma}[theorem]{Lemma}
\newtheorem{rem}[theorem]{Remark}
\theoremstyle{definition}
\newtheorem{definition}[theorem]{Definition}
\newtheorem{conj}{Conjecture}
\renewcommand{\Re}{{\rm Re}\,}
\renewcommand{\Im}{{\rm Im}\,}
\newcommand{\del}{\partial}
\newcommand{\delbar}{\bar{\del}}
\newcommand{\R}{\mathbb{ R}}
\newcommand{\C}{\mathbb{ C}}
\newcommand{\Z}{\mathbb{ Z}}
\newcommand{\Hh}{\mathcal{H}}
\renewcommand{\H}{\mathbb{ H}}
\renewcommand{\P}{\mathbb{ P}}
\newcommand{\HP}{\H\P}
\newcommand{\CP}{\C\P}
\newcommand{\invers}{^{-1}}
\DeclareMathOperator{\End}{End}
\DeclareMathOperator{\Hom}{Hom}
\DeclareMathOperator{\Gl}{GL}
\DeclareMathOperator{\tr}{tr}
\DeclareMathOperator{\im}{im}
\DeclareMathOperator{\Span}{span}
\newcommand{\zo}{^{(0,1)}}
\newcommand{\oz}{^{(1,0)}}
\newcommand{\trivial}[1]{\underline{\H}^{#1}}
\newcommand{\trivialC}[1]{\underline{\C}^{#1}}
\renewcommand{\line}{\text{---}}
\begin{document}
\title{Harmonic map methods for Willmore surfaces}
\author{K. Leschke}

\address{K. Leschke, Department of Mathematics, University of
  Leicester, University Road, Leicester LE1 7RH, United Kingdom}

\email{k.leschke@le.ac.uk}

\thanks{Author partially supported by DFG SPP 1154 ``Global Differential Geometry''}

\maketitle
 
\begin{abstract}
  In this note we demonstrate how the analogy between the harmonic
  Gauss map of a constant mean curvature surface and the harmonic
  conformal Gauss map of a Willmore surface can be used to obtain  
  results on Willmore surfaces.
\end{abstract}
%%% Local Variables: 
%%% mode: latex
%%% TeX-master: "doc"
%%% End: 
\section{Introduction}

For an immersion $f: M \to \R^3$ of a Riemann surface into Euclidean
3--space the harmonicity of the Gauss map $N: M \to S^2$ characterises
by the Ruh--Vilms theorem \cite{Ruh-Vilms} a constant mean curvature
surface. In particular, methods from harmonic map theory can be used
to study constant mean curvature surfaces, e.g. \cite{eells&wood,
  hitchin-harmonic, Bob, PS}.  Viewing Euclidean 3--space as the
imaginary quaternions equipped with the inner product $<a,b> =
-\Re(ab)$, $a,b\in\Im\H$, the Gauss map of a constant mean curvature
surface can be seen as a complex structure on the trivial $\H$--bundle
over $M$.

For Willmore surfaces $f: M \to \R^4$ a similar characterisation is
known \cite{ejiri, rigoli}: a conformal immersion is Willmore if and
only if its conformal Gauss map is harmonic.  Since the Willmore
property is a conformal invariant, we study Willmore surfaces up to
M\"obius transformations, i.e., we consider conformal immersions $f: M
\to S^4$ from a Riemann surface into the 4--sphere. Our main tools are
from Quaternionic Holomorphic Geometry \cite{icm, coimbra, klassiker},
in particular, we view the quaternionic projective line $\HP^1=S^4$ as
the 4--sphere and use the general linear group $\Gl(2,\H)$ to study
the conformal geometry of the 4--sphere. In this setup, the conformal
Gauss map can be seen as a complex structure on the trivial
$\H^2$--bundle over $M$, and in this sense, Willmore surfaces can be
considered as a generalisation of a rank 1 theory to a rank 2 problem.
The aim of this paper is to demonstrate in two examples how this
analogy can be exploited to obtain results for Willmore surfaces.

%%% Local Variables: 
%%% mode: latex
%%% TeX-master: "doc"
%%% End: 
\section{Willmore surfaces in the 4--sphere}

We first recall some basic facts on Willmore surfaces, for details
compare \cite{coimbra}.  In this paper we consider conformal
immersions $f: M \to S^4$ from a Riemann surface $M$ into $S^4$ where
we identify the 4--sphere $S^4=\HP^1$ with the quaternionic projective
line.  Since a point in $\HP^1$ gives a line in $\H^2$, a map $f: M
\to \HP^1$ can be identified with a line subbundle $L\subset \trivial
2$ of the trivial bundle $\trivial 2 = M \times \H^2$ by $L_p =
f(p)$. The differential of $f$ is under this identification given by
\[
\delta: \Gamma(L) \to
\Omega^1(\trivial 2/L), \psi \mapsto \pi d\psi
\]
where $d$ is the trivial connection on $\trivial 2$ and $\pi: \trivial
2\to \trivial 2/L$ is the canonical projection. An immersion is
\emph{conformal} if there exists a complex structure
$S\in\Gamma(\End(\trivial 2))$, that is a smooth map into the
quaternionic endomorphism with $S^2=-1$, which stabilises $L$ and is
compatible with the complex structure $J_M$ on the tangent space $TM$,
that is,
\begin{equation}
\label{eq:conformality}
*\delta = S \delta = \delta S
\end{equation}
where $*\delta(X) = \delta(J_M X)$ for $X\in TM$. Note that since $S L
\subset L$, the complex structure $S$ induces a complex structure on
$\trivial 2/L$ which we denote again by $S$.  The condition
(\ref{eq:conformality}) does not determine the complex structure $S$
uniquely; to fix $S$ we consider the conformal Gauss map of $f$: we
decompose the differential $dS= (dS)' + (dS)''$ of a complex structure
$S$ into $(1,0)$ and $(0,1)$ parts
\[
(dS)' = \frac 12(dS - S*dS), \quad (dS)''= \frac 12(dS + S*dS)
\]
with respect to the complex structure $S$. Denoting by
\begin{equation}
\label{eq:Hopf fields}
A = \frac 12(*dS)', \quad Q= -\frac 12(*dS)''\,,
\end{equation}
the \emph{Hopf fields} of $S$, the derivative of $S$ can be written as
$ dS = 2(*Q -*A)$. Note that the Hopf fields anti--commute with $S$
and thus satisfy
\begin{equation}
\label{eq:A,Q}
*A = S A = -AS\,, \quad \text{ and } \quad *Q = -S Q = Q S\,.
\end{equation}

\begin{definition}
  Let $f: M \to S^4$ be a conformal immersion from a Riemann surface
  $M$ into the 4--sphere. The \emph{conformal Gauss map} of $f$ is a
  complex structure $S\in \Gamma(\End(\trivial 2)), S^2=-1$, such that
$*\delta = S \delta = \delta S$ and 
\begin{equation}
\label{eq:conformal gauss map}
\im A \subset L \quad \text{or, equivalently, } \quad L \subset \ker Q\,.
\end{equation}
\end{definition}

A complex structure $S\in \End(\H^2)$ can be identified with a
2--sphere $S'\subset S^4$ by $S' =\{ l\in \HP^1 \mid Sl =l\}$. This
way, a complex structure $S\in\Gamma(\End(\trivial 2))$ gives a sphere
congruence, and the condition $SL \subset L$ says that $f(p) \in
S'(p)$ is a point on the sphere given by $S$ at $p$.  Moreover, the
conformality equation (\ref{eq:conformality}) says that $S$ envelopes
$f$ that is, $f(p)\in S'(p)$, and the tangent space of $S'(p)$ at
$f(p)$ and $d_pf(T_pM)$ coincide in an oriented way. Finally, the
condition (\ref{eq:conformal gauss map}) shows that the mean curvature
vectors of $f(M)$ and of $S'(p)$ coincide at $f(p)$. In other words,
the conformal Gauss map is the \emph{mean curvature sphere congruence}
of the conformal immersion $f$.

\begin{definition}
  A conformal immersion $f: M \to S^4$ of a compact Riemann surface
  $M$ into the 4--sphere is called \emph{Willmore surface} if $f$ is a
  critical point of the \emph{Willmore energy}
\[
W(f) =2 \int_M <A \wedge *A>
\]
under compactly supported variations of $f$ (where the conformal
structure of $M$ may change under the variation). Here $A$ is the Hopf
field of the conformal Gauss map $S$ of $f$, and $<B> =\tr B$ is the
real trace of an endomorphism $B\in\End(\H^2)$.
\end{definition}
\begin{rem}
  The Willmore energy of a conformal immersion $f: M \to S^4$ is given
  by
\[
W(f) =\int_M (|\Hh|^2- K -K^\perp)\text{vol}_{f^*h}
\]
where the mean curvature vector $\Hh$, the Gaussian curvature $K$, and
the normal bundle curvature $K^{\perp}$ are computed with respect to a
conformally flat metric $h$ on $S^4$.
\end{rem}
Since the energy functional of the conformal Gauss map coincides with
the Willmore energy of $f$ up to topological constants, one can show:
\begin{theorem}[\cite{ejiri, rigoli}, for the quaternionic formulation
  \cite{coimbra}]
  A conformal immersion $f: M \to S^4$ is Willmore if and only if the
  conformal Gauss map $S$ of $f$ is harmonic, that is, if and only if
\begin{equation}
\label{eq: conformal Gauss map harmonic}
d*A =0 \quad \text{ or, equivalently, } \quad d*Q =0\,.
\end{equation}

\end{theorem}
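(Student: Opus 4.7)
My plan is to reduce the theorem to the standard characterization of harmonic maps from a Riemann surface into a symmetric space, by viewing $S$ as a map $M \to \mathcal{Z}$, where $\mathcal{Z}$ is the homogeneous $\Gl(2,\H)$-space of complex structures on $\H^2$, and identifying the Willmore energy with its Dirichlet energy up to a topological correction.

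The easy half, that $d*A = 0$ and $d*Q = 0$ are equivalent, follows directly from the identity $dS = 2(*Q - *A)$ recorded in the excerpt: applying the exterior derivative and using $d^2 S = 0$ gives $d*Q = d*A$.

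For the main equivalence, using the anticommutation relations \eqref{eq:A,Q} and the trace pairing $\langle B\rangle = \tr B$ one first checks
\[
\tfrac14 \langle dS\wedge *dS\rangle = \langle A\wedge *A\rangle + \langle Q\wedge *Q\rangle,
\]
the cross terms being killed by the opposite type of $A$ and $Q$ with respect to $S$. A Plücker-type identity from quaternionic holomorphic geometry (compare \cite{klassiker, coimbra}) then expresses $\int_M \langle A\wedge *A\rangle - \int_M \langle Q\wedge *Q\rangle$ as a topological invariant, built from the Euler characteristic of $M$ and the degrees of the holomorphic line bundles $L$ and $\trivial 2/L$ determined by $S$. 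Combining these, $W(f)$ equals the Dirichlet energy $\tfrac12 \int_M \langle dS \wedge *dS\rangle$ up to a topological term that is constant under compactly supported variations of $f$; so $f$ is Willmore if and only if $S$ is critical for its Dirichlet energy. Standard harmonic map theory for a Riemann surface into the symmetric space $\mathcal{Z}$ then identifies the Euler--Lagrange equation as the vanishing of the $d*$-divergence of the $(1,0)$-part of $dS$ with respect to $S$, and since by definition $(*dS)' = 2A$, this is exactly $d*A = 0$.

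The main obstacle I anticipate is disentangling the mean-curvature-sphere constraint on $S$ from the unconstrained harmonic map problem: variations of $f$ do not a priori realize every infinitesimal direction in $T_S\mathcal{Z}$, so one must show that the variations of $S$ induced by variations of $f$ are rich enough to force the full Euler--Lagrange equation $d*A = 0$, rather than only its projection to some subspace. This should follow from an infinitesimal construction of variations of $f$ that realize any prescribed first-order deformation of the enveloping sphere congruence compatible with the conformality condition \eqref{eq:conformality} and the normalization \eqref{eq:conformal gauss map}.
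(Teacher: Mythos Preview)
Your approach matches the paper's: the paper gives no self-contained proof beyond the one-sentence remark preceding the theorem (that the energy functional of $S$ agrees with $W(f)$ up to topological constants) and defers to \cite{ejiri, rigoli, coimbra} for the details. The obstacle you correctly flag---that variations of $f$ induce only a constrained class of variations of $S$, so criticality of $W$ does not a priori force the full harmonic map equation---is precisely the substantive content of those references; your proposed resolution by realizing sufficiently many variations of $S$ is one viable route, though in \cite{coimbra} the first variation of $W$ is computed directly in terms of the variation of $L$ and the Euler--Lagrange equation is identified with $d*A=0$ without detouring through the unconstrained harmonic map problem on $\mathcal{Z}$.
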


%%% Local Variables: 
%%% mode: latex
%%% TeX-master: "doc"
%%% End: 
\section{Willmore sequences}

In the case of harmonic maps $N: M \to \CP^n$ of a Riemann surface
into complex projective space the $(0,1)$ and the $(1,0)$ part of the
derivative of $N$ give a sequence of harmonic maps
\cite{glaser-stora},\cite{din-zak}. This sequence can be used to prove
the Eells--Wood theorem \cite{eells&wood} and its generalisations
\cite{durhamboys, wolfson, uhlenbeck}): a harmonic map from a genus
$g$ Riemann surface into the complex projective line $\CP^1$ is
holomorphic or anti--holomorphic if the degree of the harmonic map is
bigger than $g-1$. For example, a constant mean curvature sphere has a
holomorphic unit normal map and thus is a round sphere as first
observed by Hopf \cite{hopf}.  On the other hand, immersed constant
mean curvature tori have unit normal maps of degree zero and the
harmonic sequence does not terminate. This case leads to the theory of
spectral curves and the construction of constant mean curvature tori
from algebraically completely integrable systems \cite{PS,
  hitchin-harmonic, Bob}.

We explain a corresponding $\delbar$ and $\partial$ construction on
the harmonic conformal Gauss map to obtain Willmore sequences,
\cite{sequences}. Using the Willmore sequence we get a unified proof
of the classification results for Willmore spheres \cite{bryant,
  montiel, ejiri}, and Willmore tori with non--trivial normal bundle
\cite{willmore_tori}. In particular, it only remains to study
integrable system methods and spectral curves for Willmore tori with
trivial normal bundle \cite{conformal_tori, schmidt}.

Note that the harmonicity (\ref{eq: conformal Gauss map harmonic}) of
the conformal Gauss map of a Willmore surface can be read as a
holomorphicity condition: decomposing the trivial connection $d= d_+ +
d_-$ into $S$ commuting and anti--commuting parts, the type
decompositions are
\[
d_+ =\partial + \delbar, \quad d_-=  A  + Q
\]
with complex holomorphic and anti--holomorphic structures $\partial$
and $\delbar$ and Hopf fields $A$ and $Q$.   Then 
\[
K\End_-(\trivial 2) =\{\omega\in\Omega^1(\End(\trivial 2)) \mid *\omega = S\omega = -\omega S\}
\]
can be equipped \cite{coimbra} with a complex holomorphic structure
$\delbar$ by putting
\[
(\delbar_X B)(Y)\phi = \delbar_X(B(Y)\phi) - B(\delbar_XY) \phi - B(Y) \partial_X\phi
\]
for $B\in \Gamma(K\End_-(\trivial 2))$ where $\phi\in\Gamma(\trivial
2)$, $X,Y\in \Gamma(TM)$, and $\delbar_X Y = \frac12([X,Y] + J[JX,
Y])$ is the complex holomorphic structure on $TM$.

Since the Hopf field $A$ of the conformal Gauss map of a Willmore
surface is a section of $K\End_-(\trivial 2)$, the harmonicity $d*A=0$
can be read as the condition that $A$ is a holomorphic section of
$\Gamma(K\End_-(\trivial 2))$ since
\[
d*A(X,J_MX) = -2(\delbar_X A)(X)\,.
\]
In particular, if $A\not=0$ the zeros of $A$ are isolated and the
kernel of $A$ defines a line subbundle $L_1 =\ker A$ of the trivial
$\H^2$ bundle. Similarly, the Hopf--field $Q$ is an anti--holomorphic
section of the bundle $\bar K\End_-(\trivial
2)=\{\omega\in\Omega^1(\End(\trivial 2)) \mid *\omega = -S\omega =
\omega S\}$, and the image of $Q$ defines a line subbundle $L_{-1}=
\im Q$ of $\trivial 2$.  The corresponding two maps $f_1, f_{-1}: M
\to S^4$ are either constant or branched conformal immersions
\cite{coimbra}.

\begin{definition}
  If $A\not=0$ then $\ker A = L_1$ is called the \emph{forward
    B\"acklund transform} of $f$ whereas for $Q\not=0$ we call $\im
  Q=L_{-1}$ the \emph{backward B\"acklund transform} of $f$.
\end{definition}

Since $A$ and $Q$ are essentially (\ref{eq:Hopf fields}) the $(1,0)$
and $(0,1)$ part of $dS$, the B\"acklund transformation is an analogue of
the $\delbar$ and $\partial$ construction for harmonic maps into
$\CP^n$.  To obtain a sequence of Willmore surfaces, we need to ensure
that the conformal Gauss map of a B\"acklund transforms extends
smoothly into the branch points of the B\"acklund transform. Indeed:

\begin{theorem}[\cite{sequences}]
  The conformal Gauss map of a B\"acklund transform $f_i$, $i=1,-1$,
  of a Willmore surface is a harmonic complex structure on $\trivial
  2$.  In particular, $f_i$ is a (branched) Willmore surface.
\end{theorem}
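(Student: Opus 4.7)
The plan is to identify the conformal Gauss map of the B\"acklund transform $f_i$ with the harmonic complex structure $S$ of $f$, after smooth extension across the isolated branch points. Since harmonicity $d*A=0=d*Q$ is a condition on $S$ alone (independent of $L$ versus $L_i$), the conformal Gauss map of $f_i$ will then be automatically harmonic, and by the (branched) Ejiri--Rigoli theorem $f_i$ will be a branched Willmore surface.

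On the complement $M^* := M \setminus Z_i$, where $Z_i$ is the finite zero set of $A$ (for $i=1$) or $Q$ (for $i=-1$), the line bundle $L_i$ is smooth of rank one. The anti-commutation rules (\ref{eq:A,Q}), $AS=-SA$ and $QS=-SQ$, immediately give $SL_i \subset L_i$: for $\psi \in \ker A$ one has $A(S\psi) = -SA\psi = 0$, and dually for $\im Q$. Since $A$ is a holomorphic section of $K\End_-(\trivial 2)$ by (\ref{eq: conformal Gauss map harmonic}), the kernel $L_1 = \ker A$ is a holomorphic subbundle (and dually $L_{-1} = \im Q$ is antiholomorphic); this will feed into the verification of conformality of $\delta_i$ by ensuring that the relevant pieces of $d|_{L_i}$ lie inside $L_i$ and project to zero in $\trivial 2 / L_i$.

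I would then verify on $M^*$ the defining conditions (\ref{eq:conformality}) and (\ref{eq:conformal gauss map}) of the conformal Gauss map of $f_i$ with respect to $S$: conformality $*\delta_i = S \delta_i = \delta_i S$ follows from the type decomposition $d = \partial + \delbar + A + Q$ restricted to $\Gamma(L_i)$ and projected to $\trivial 2 / L_i$, using the holomorphicity/antiholomorphicity of $L_i$ to discard the wrong-type terms; the mean curvature sphere condition $\im A \subset L_i$ (equivalently $L_i \subset \ker Q$) is extracted from the corresponding condition $\im A \subset L \subset \ker Q$ for $f$ combined with $d*A = 0$ and the flatness $d^2 = 0$. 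At each $p_0 \in Z_i$ the section $A$ (resp.\ $Q$) vanishes to finite order, and dividing out the local vanishing order extends $L_i$ smoothly across $p_0$; all identities then extend by continuity, and $S$ becomes the conformal Gauss map of $f_i$ on all of $M$.

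The main obstacle is the mean curvature sphere identity $\im A \subset L_i$ for $f_i$. While $SL_i \subset L_i$ and the holomorphicity of $L_i$ are essentially free from the anti-commutation relations and the holomorphicity of $A$, deriving $\im A \subset L_i$ uses the Willmore hypothesis essentially: the flatness $d^2 = 0$ combined with $d*A = 0$ must yield the algebraic constraints needed (e.g.\ $A \circ A = 0$ in the case $i=1$, equivalently $L_1 \subset \ker Q$). The branch-point extension of Step three is technically standard but requires care in matching vanishing orders of $A$ or $Q$ with the transition data of $L_i$ near each $p_0$.
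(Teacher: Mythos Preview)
The paper does not actually prove this theorem; it is quoted from \cite{sequences} without argument, so there is no in-paper proof to compare against. Your proposal must therefore be judged on its own, and its central premise is incorrect.

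You claim that the conformal Gauss map of $f_i$ coincides with the conformal Gauss map $S$ of $f$. This is false in general, and the failure is precisely at the step you flag as ``the main obstacle''. Suppose $S$ were the conformal Gauss map of $f_1=\ker A$. Then by (\ref{eq:conformal gauss map}) applied to $L_1$ we would need $\im A\subset L_1$. But $S$ is already the conformal Gauss map of $f$, so $\im A\subset L$ as well. Away from the isolated zeros of $A$, the image $\im A$ is a quaternionic line, and since $L$ and $L_1$ are line subbundles we would be forced to conclude $L=\im A=L_1$, i.e.\ $f_1=f$. The same reasoning with $Q$ gives $f_{-1}=f$. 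This would make every B\"acklund transform trivial and every Willmore sequence collapse to a single surface, in direct contradiction to the sequences of length $\ge 2$ and the infinite sequences described in the lemma immediately following the theorem. (Equivalently: the Hopf fields $A_i,Q_i$ appearing in the iteration are those of the \emph{new} conformal Gauss map $S_i$, not of $S$; if $S_i=S$ one could never produce an $f_2\neq f_1$.)

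So the harmonicity and flatness identities you hope to invoke cannot manufacture $\im A\subset \ker A$, because that relation simply does not hold for a generic Willmore surface. The actual argument in \cite{sequences} must construct the genuine conformal Gauss map $S_i\neq S$ of $f_i$, express it (and its Hopf fields) in terms of $S$, $A$, $Q$, and then verify both the smooth extension across the zeros of $A$ (resp.\ $Q$) and the harmonicity $d*A_i=0$ for this new complex structure. Your outline would need to be rewritten around building $S_i$ rather than reusing $S$.
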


Thus, applying this procedure inductively, we obtain a sequence $f_i$,
$i\in \Z$, of Willmore surfaces which only breaks down if $A_i =0$,
$Q_i=0$ or $f_i$ is constant.  If the sequence breaks down at some
point $i$, then the sequence is in fact finite:

\begin{lemma}[\cite{sequences}]
  The possible sequences for a Willmore surface $f: M \to S^4$ are
  of the following form:
\begin{enumerate}
\item $
\circ \line \stackrel f
\bullet
\line \circ$,
\item $ \circ \line
\stackrel f
  \bullet \line ), ( \line  \stackrel f \bullet \line \circ$,
\item $
( \line \stackrel f \bullet \line )$
\item $
 ( \line \stackrel f \bullet \line \bullet
 \line )$, $( \line  \bullet \line \stackrel f \bullet
 \line )$, or
\item $
 \ldots  \line \bullet
\line
\stackrel f \bullet \line \bullet \line \bullet  \line \ldots  $
\end{enumerate}
where  $\bullet$ indicates a (non--constant) Willmore surface,
$\circ$ a point, and '')'' and ''(''  indicate that $A$ and $Q$
respectively are zero.
\end{lemma}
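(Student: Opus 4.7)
My plan is to combine the reciprocity of the B\"acklund transformation with a geometric analysis of how each of the three termination modes ($A_i=0$, $Q_i=0$, or $f_i$ constant) propagates along the sequence; the five patterns (i)--(v) then follow by case enumeration.

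First I would establish reciprocity: if $A_i\neq 0$ then the preceding theorem produces a Willmore surface $f_{i+1}=\ker A_i$ with harmonic conformal Gauss map $S_{i+1}$ and Hopf fields $A_{i+1},Q_{i+1}$, and a direct computation using the type conditions $*Q_{i+1}=-S_{i+1}Q_{i+1}=Q_{i+1}S_{i+1}$ together with $L_{i+1}\subset\ker Q_{i+1}$ shows $\im Q_{i+1}\subset L_i$. In particular, provided $Q_{i+1}\neq 0$, the backward transform of $f_{i+1}$ recovers $f_i$, and the dual statement holds for $A,Q$ interchanged. This symmetry reduces the problem to understanding how a single termination on one side constrains the immediate neighbour, so I can concentrate on the three termination modes and then assemble the picture.

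Next I would handle the case where a constant appears in the sequence. If $f_{i+1}$ is constant, then $L_{i+1}=\ker A_i$ is a parallel line $\ell\subset\trivial 2$, and each 2--sphere $S_i(p)$ passes through the fixed point $f_{i+1}$ because $S_i$ stabilises both $L_i$ and $\ell$ (the second using $S_iA_i=-A_iS_i$). The block structure of $S_i$ with respect to the decomposition $\trivial 2=L_i\oplus\ell$, combined with $L_i\subset\ker Q_i$ and the anti-commutation $S_iQ_i=-Q_iS_i$, forces either $Q_i=0$ (case (ii), a $($-termination on the backward side) or $\im Q_i=\ell$ so that $f_{i-1}$ is also a constant (case (i)). In particular, a $\circ$ can never be adjacent to a non-terminating $\bullet$ other than $f$ itself, and the analogous statement on the other side is obtained by symmetry.

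Finally I would show that a single $)$-termination $A_{i+1}\equiv 0$ forces the sequence to terminate within one step on the backward side. Since $A_{i+1}\in\Gamma(K\End_-(\trivial 2))$ is a holomorphic section for the $\delbar$-structure recalled in the introduction, the vanishing $A_{i+1}\equiv 0$ is a strong algebraic condition; combined with the harmonicity $d*A_i=0$ and the explicit relation between $S_i$ and $S_{i+1}$ from the reciprocity step, it forces either $Q_i=0$ or $f_{i-1}$ constant, and dually $Q_{i-1}\equiv 0$ forces $A_i=0$ or $f_{i+1}$ constant. Induction then bounds the non-constant part of any terminating sequence by two terms, and the five possible patterns follow by listing the compatible terminations at each end of $f$. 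The main technical obstacle is this last propagation step: tracking precisely how $S_i, A_i, Q_i$ transform under B\"acklund and exploiting harmonicity to transfer the vanishing of one Hopf field at $f_{i+1}$ to a forced vanishing (or constancy) at $f_{i-1}$.
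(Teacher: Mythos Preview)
The paper does not prove this lemma; it is merely stated with a citation to \cite{sequences}. So there is no in-text argument to compare against, and one has to judge your sketch on its own merits and against what the cited reference actually does.

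Your overall architecture---reciprocity plus propagation of a termination to the other side---is the right one, and your Step~2 is essentially correct: once $\ell=\ker A_i$ is a \emph{constant} line it is $d$--parallel and $S_i$--stable, hence $dS_i$ and therefore $Q_i$ preserve $\ell$; combined with $L_i\subset\ker Q_i$ this forces $\im Q_i\subset\ell$, so either $Q_i=0$ or $f_{i-1}=\ell$ is the same constant point. (Your phrase ``block structure of $S_i$'' hides that the real reason is $d$--parallelism of $\ell$, not merely $S_i$--stability; there are infinitely many $S_i$--stable lines at each point.)

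The genuine gap is Step~3. First, its conclusion is too weak: you claim $A_{i+1}\equiv 0$ forces ``$Q_i=0$ or $f_{i-1}$ constant'', but the second alternative would produce a pattern $\circ\text{---}\bullet\text{---}\bullet\text{---})$ that is \emph{not} among (i)--(v). What one actually needs is the sharper implication $A_{i+1}\equiv 0\Rightarrow Q_i\equiv 0$ (and dually $Q_{i-1}\equiv 0\Rightarrow A_i\equiv 0$). Second, the justification you offer---``harmonicity and the explicit relation between $S_i$ and $S_{i+1}$''---is precisely the content that is missing. In \cite{sequences} this is obtained by computing $S_{i+1}$ and its Hopf fields explicitly in terms of $S_i$, $A_i$, $Q_i$ (using that $S_{i+1}$ agrees with $S_i$ on $L_{i+1}$ and is determined on $\trivial 2/L_{i+1}$ by the mean-curvature-sphere condition); from these formulae one reads off a direct correspondence between $A_{i+1}$ and $Q_i$. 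Without that computation your induction does not close, and the enumeration (i)--(v) does not follow from the dichotomy you have written down.
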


The finite sequences can be classified: we first note that $SL_i=L_i$
by (\ref{eq:A,Q}) for the B\"acklund transforms $f_i$, $i=1,-1$.  In
particular, if one of the B\"acklund transforms is a constant point
$f_i=\infty$, then the image of $f$ under the stereographic projection
across $\infty$ of $S^4=\R^4\cup \{\infty\}$ to $\R^4$ has mean
curvature sphere congruence $S$ with $\infty\in S(p)$ for all $p$. In
other words, the mean curvature sphere congruence of the surface in
$\R^4$ degenerates to a plane, and $f$ gives a minimal surface in
$\R^4$ under the stereographic projection. The minimal surface has
planar ends since $f_i=\infty\in L_q$ for some $q\in M$.  On the other
hand, in the case when $A=0$ then $f$ is the twistor projection of
holomorphic curves in $\CP^3$ \cite{coimbra}. Finally, if $Q=0$ then
$f$ is the dual surface of such a twistor projection.

\begin{lemma}[\cite{sequences}]
If the Willmore sequence of a Willmore surface $f: M \to S^4$ is finite then one of the following holds:
\begin{enumerate}
\item $f$ is after stereographic projection a minimal surface in $\R^4$ with planar ends, or
\item $f$ comes from the twistor projection of a complex holomorphic
  curve in $\CP^3$.
\end{enumerate}
\end{lemma}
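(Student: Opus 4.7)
The plan is a direct case analysis of the finite Willmore sequences listed in the preceding lemma, together with two observations that the paragraph before the lemma has already assembled.

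The first observation is conformal in nature. If one of the B\"acklund transforms $f_i$, $i=\pm 1$, is a constant point $p\in S^4$, I would apply a M\"obius transformation sending $p$ to $\infty\in S^4=\R^4\cup\{\infty\}$ and stereographically project. Since $p\in S(q)$ for every $q\in M$, the mean curvature sphere congruence $S$ in $\R^4$ degenerates everywhere to an affine $2$--plane, so $f$ becomes a minimal surface in $\R^4$; the isolated points of $M$ at which $f_i\in L_q$ then correspond exactly to planar ends. This puts $f$ into case (1). The second observation is what the excerpt already records: the condition $A_f=0$ identifies $f$ with the twistor projection of a holomorphic curve in $\CP^3$, and $Q_f=0$ identifies $f$ with the dual of such a twistor projection, placing $f$ into case (2) either way.

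With these two ingredients in place, I would read off which alternative holds in each finite family (i)--(iv) of the preceding lemma, using the convention there that ``$\circ$'' marks a constant B\"acklund transform while ``('' and ``)'' mark positions at which $Q$ respectively $A$ vanishes. In sequence (i), $\circ \line \bullet_f \line \circ$, both B\"acklund transforms of $f$ are constant, so the first observation gives case (1). In either variant of sequence (ii) one B\"acklund transform of $f$ is still constant and the same argument applies. In sequence (iii), $( \line \bullet_f \line )$, both $Q_f=0$ and $A_f=0$, so the second observation gives case (2). Finally in sequence (iv) the bracket adjacent to $f$ always lies directly on $f$: in $( \line \bullet_f \line \bullet \line )$ one has $Q_f=0$, and in $( \line \bullet \line \bullet_f \line )$ one has $A_f=0$; in both variants case (2) holds.

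The only real obstacle I anticipate is bookkeeping: matching each symbol in the diagrammatic sequences to a vanishing condition on $A_f$ or $Q_f$ (rather than on a neighbouring member of the sequence), so that each conclusion is stated about $f$ itself. The substantive analytic content -- that a conformal Gauss map which everywhere passes through $\infty$ in $\R^4$ corresponds to a minimal surface, that the zero locus of $A$ or $Q$ produces planar ends, and that $\ker A$ and $\im Q$ encode twistor data in $\CP^3$ -- is invoked from \cite{coimbra, sequences} rather than re-proved.
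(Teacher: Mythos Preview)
Your proposal is correct and follows the same route as the paper: the paragraph preceding the lemma assembles exactly your two observations (constant B\"acklund transform $\Rightarrow$ minimal with planar ends; $A=0$ or $Q=0$ $\Rightarrow$ twistor projection or its dual), and your case analysis over the diagrams (i)--(iv) simply makes explicit what the paper leaves implicit. Your reading of the bracket conventions --- that ``('' immediately left of $f$ means $Q_f=0$ and ``)'' immediately right of $f$ means $A_f=0$ --- is the intended one, so the case (iv) bookkeeping you flagged as the only potential pitfall is handled correctly.
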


  Since minimal surface are given by complex holomorphic functions via
  the Weierstrass representation, the previous lemma can be read as
  the statement that a Willmore surface with finite Willmore sequence
  is given by complex holomorphic data.

\begin{lemma}[\cite{willmore_tori}]
\label{eq: dual willmore}
  Let $f: M \to S^4$ be a Willmore surface which allows a dual
  Willmore surface, that is $AQ=0$. Then the Willmore sequence of $f$
  is finite if $f$ has normal bundle degree $|\deg \perp_f|> 4(g-1)$.
\end{lemma}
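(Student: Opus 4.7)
The plan is to use the $AQ = 0$ hypothesis together with degree estimates for the holomorphic Hopf fields $A$ and $Q$ to force one of them to vanish identically once $|\deg \perp_f| > 4(g-1)$, and then invoke the preceding classification lemma.

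First, I would interpret the dual assumption. The relation $AQ = 0$ says $\im Q \subset \ker A$; since both $L_{-1} = \im Q$ and $L_1 = \ker A$ are line subbundles of $\trivial 2$ (away from isolated points), they coincide, so $f_{-1} = f_1$. This self-duality is inherited along the B\"acklund construction, so the sequence of $f$ is mirror-symmetric about $f$ and it suffices to control termination on one side.

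Second, I would exploit the holomorphicity of the Hopf fields. Since $A$ is a holomorphic section of the complex holomorphic bundle $K\End_-(\trivial 2)$, either $A \equiv 0$ or its total vanishing order equals the complex degree $\deg(K\End_-(\trivial 2))$; the analogous statement applies to $Q$ as an anti-holomorphic section of $\bar K\End_-(\trivial 2)$. Using the splitting of $\trivial 2$ under the complex structure $S$ into $L$ and $\trivial 2/L$ together with $\deg K = 2g-2$, a Chern class computation expresses these two degrees in terms of $g$ and $\deg \perp_f$. Under the dual assumption $AQ=0$ the two expressions are tied together so cleanly that the simultaneous non-vanishing of $A$ and $Q$ forces $|\deg \perp_f| \leq 4(g-1)$.

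Therefore, if $|\deg \perp_f| > 4(g-1)$ then at least one of $A$ or $Q$ vanishes identically on $M$. By the previous classification lemma, $f$ is then either a twistor projection of a holomorphic curve in $\CP^3$ (or its dual), or after stereographic projection a minimal surface in $\R^4$ with planar ends, both of which have finite Willmore sequence. Together with the symmetry from Step 1, this shows that the full sequence of $f$ is finite.

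The main obstacle is making the Chern class computation in Step 2 sharp enough to produce exactly the bound $|\deg \perp_f| \leq 4(g-1)$: one needs the precise identification of the complex degree of $K\End_-(\trivial 2)$ under the splitting induced by $S$, and the observation that, under $AQ=0$, the contributions from $A$ and from $Q$ can be added to give a bound depending only on $\deg\perp_f$ and $g$ rather than on both $\deg L$ and $\deg \trivial 2/L$ separately.
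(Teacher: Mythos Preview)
The paper does not actually prove this lemma; it is stated with a citation to \cite{willmore_tori} and no argument is given here. So there is no ``paper's own proof'' to compare against directly. What the paper \emph{does} contain is the proof of the subsequent theorem, which uses a different mechanism: the Pl\"ucker relation for quaternionic holomorphic curves plus a telescoping argument yields the estimate
\[
\tfrac{1}{4\pi}W(f)\ge -4n(n+1)(g-1) - n\,\deg\perp_f
\]
for a Willmore surface admitting $n$ B\"acklund transforms, and one derives a contradiction from unboundedness of the right-hand side. This is the flavour of argument in \cite{willmore_tori} as well; the dual hypothesis $AQ=0$ is used there to refine such an estimate via the symmetry $f_1=f_{-1}$ of the sequence, not to run a bare Chern--class count on $A$ and $Q$.

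Your proposal takes a genuinely different route: a direct degree argument on the Hopf fields to force $A\equiv 0$ or $Q\equiv 0$. Two concrete issues. First, your claim that the total vanishing order of $A$ equals $\deg(K\End_-(\trivial 2))$ is only valid for holomorphic sections of \emph{line} bundles; $K\End_-(\trivial 2)$ is not a line bundle. You can repair this by using the constraints $\im A\subset L$, $\ker A=L_1$ (and $L_{-1}=L_1$ under $AQ=0$) to view $A$ as a holomorphic section of a genuine complex line bundle such as $K\Hom_-(\trivial 2/L_1,L)$, but this identification has to be made explicit before any degree formula applies. Second, and more seriously, you yourself flag that the Chern--class computation producing exactly the bound $|\deg\perp_f|\le 4(g-1)$ is ``the main obstacle'' and you do not carry it out; as it stands the proposal is a strategy, not a proof. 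Finally, a small logical slip: once you conclude $A=0$ or $Q=0$ you should invoke the earlier statement ``if the sequence breaks down at some point $i$, then the sequence is in fact finite'' rather than the classification lemma, which runs in the opposite direction (finite $\Rightarrow$ minimal or twistor).
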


In particular, since Willmore spheres have dual surfaces, we recover
the results of Bryant \cite{bryant}, Montiel \cite{montiel} and Ejiri
\cite{ejiri} for Willmore spheres. More generally, we have:

\begin{theorem}[\cite{willmore_tori}, \cite{sequences}]
  If $f: S^2 \to S^4$ is a Willmore sphere, or if $f: T^2 \to S^4$ is
  a Willmore torus with non--trivial normal bundle, then $f$ is given
  by complex holomorphic data. More precisely, $f$ is after
  stereographic projection a minimal surface in $\R^4$ with planar
  ends or comes from the twistor projection of a complex holomorphic
  curve in $\CP^3$.
\end{theorem}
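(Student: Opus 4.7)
The plan is to combine Lemma~\ref{eq: dual willmore} with the classification of Willmore surfaces having finite Willmore sequence stated immediately before it. Together they reduce the theorem to showing, in each of the two cases, that $f$ admits a dual Willmore surface (i.e.\ $AQ=0$) and that $|\deg\perp_f|>4(g-1)$.

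The degree bound is straightforward. For a Willmore sphere one has $g=0$, so $4(g-1)=-4$ and any integer satisfies $|\deg\perp_f|\geq 0>-4$. For a Willmore torus one has $g=1$, so the bound becomes $|\deg\perp_f|>0$, which is exactly the hypothesis that the normal bundle is non-trivial.

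The substantive step — and the step I expect to be the main obstacle — is verifying the existence of the dual Willmore surface, equivalently $AQ=0$. My approach would be a degree/index argument on the quaternionic holomorphic bundles $K\End_-(\trivial 2)$ and $\bar K\End_-(\trivial 2)$, in which $A$ and $Q$ sit as holomorphic and anti-holomorphic sections by harmonicity of the conformal Gauss map. Because $\im A\subset L$ and $L\subset\ker Q$, the composition $AQ$ descends to a section of a bundle built from $K$, $L$ and $\trivial 2/L$, whose degree is controlled by $\deg L$ and $\deg\perp_f$. On $S^2$ the resulting holomorphic bundle has negative degree, so any holomorphic section vanishes and hence $AQ=0$. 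On $T^2$ the non-triviality $\deg\perp_f\neq 0$ again pushes the degree of the relevant bundle into the range where no non-zero holomorphic section exists, and the same vanishing follows. The delicate point is to identify the correct holomorphic structure on the composite bundle and to account carefully for the normal bundle contribution to its degree; this is where one genuinely uses the hypothesis on $\deg\perp_f$ rather than just non-triviality.

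Once $AQ=0$ is established, Lemma~\ref{eq: dual willmore} yields the finiteness of the Willmore sequence of $f$, and the earlier classification of finite sequences then gives the stated dichotomy: after stereographic projection $f$ is either a minimal surface in $\R^4$ with planar ends, or comes from the twistor projection of a complex holomorphic curve in $\CP^3$.
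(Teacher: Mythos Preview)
Your route is genuinely different from the paper's, and in the torus case it contains a real gap.

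The paper does \emph{not} go through Lemma~\ref{eq: dual willmore}. It shows directly that the Willmore sequence must terminate, via an energy estimate: if $f$ admits at least $n$ B\"acklund transforms, the quaternionic Pl\"ucker relation together with a telescoping argument \`a la Wolfson yields
\[
\tfrac{1}{4\pi}W(f)\ \ge\ -4n(n+1)(g-1)\ -\ n\,\deg\perp_f\,.
\]
For $g=0$ the right-hand side tends to $+\infty$ with $n$; for $g=1$ one first passes, if necessary, to the dual curve $L^\perp\subset(\H^2)^*$ (which is always defined and reverses the sign of $\deg\perp_f$) so that $\deg\perp_f<0$, and again the bound diverges in $n$. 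Finiteness of the sequence then feeds into the classification lemma. The hypothesis $AQ=0$ is never used.

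Your plan, by contrast, needs $AQ=0$ as input to Lemma~\ref{eq: dual willmore}. For spheres this is available---it is the classical vanishing of the holomorphic differential attached to a Willmore sphere---and your argument then closes. For tori, however, the sketch you give does not establish $AQ=0$. You assert that non-triviality of $\deg\perp_f$ will push the degree of the bundle carrying the relevant holomorphic section below zero, but you have not identified that bundle or its holomorphic structure, and there is no evident mechanism by which $\deg\perp_f$ enters its degree; on a torus the canonical bundle is trivial, so any argument based on powers of $K$ alone yields degree zero regardless of the normal bundle. Without a concrete identification and degree computation this step is unsubstantiated, and Lemma~\ref{eq: dual willmore} cannot be invoked. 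The paper's energy estimate is precisely what circumvents this difficulty.
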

\begin{proof}
  As in the case of harmonic maps into $\CP^n$ the proof relies on an
  estimate on the energy of the harmonic map: If $f:M \to S^4$ is a
  Willmore surface with at least $n$ B\"acklund transforms, then the
  Pl\"ucker relation for quaternionic holomorphic curves
  \cite{klassiker} and a telescoping argument as in \cite{wolfson}
  give an estimate on the Willmore functional of $f$
\[
\tfrac{1}{4\pi}W(f)\ge 
-4n(n+1)(g-1) - n \, \deg \perp_f
\]
where $g$ is the genus of $M$.  In particular, in the case when $g=0$
 the right hand side 
\[
4n(n+1) - n \, \deg \perp_f 
\]
tends to $\infty$ as $n$ goes to $\infty$, contradicting the
finiteness of the Willmore energy. If $g=1$ then the leading term on
the right hand side is $-n\, \deg \perp_f$ and since $f$ has
non--trivial normal bundle we may assume, by passing to the dual
surface if necessary, that $\deg \perp_f<0$, which again gives a
contradiction.  Thus, in both cases the Willmore sequence is finite.
\end{proof}

In particular, lemma \ref{eq: dual willmore} and the previous theorem
are evidence for the following
\begin{conj}
  Let $f: M \to S^4$ be a Willmore surface of a Riemann surface $M$ of
  genus $g$ into the 4--sphere with $|\deg \perp_f|> 4(g-1)$. Then $f$
  is given by complex holomorphic data.
\end{conj}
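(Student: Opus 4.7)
The plan is to adapt the strategy from the proof of the preceding theorem: show that under the hypothesis $|\deg \perp_f|>4(g-1)$ the Willmore sequence of $f$ must terminate, and then invoke the classification of finite Willmore sequences to conclude that $f$ arises from complex holomorphic data. Arguing by contradiction I would assume the Willmore sequence $\{f_i\}_{i\in\Z}$ is infinite in at least one direction, passing to the dual surface if necessary so that $\deg\perp_f<-4(g-1)$. For every $n$ there then exist at least $n$ successive B\"acklund transforms $f_0=f, f_1, \ldots, f_n$, and one would like the Pl\"ucker relation for quaternionic holomorphic curves combined with a telescoping argument to produce a lower bound for $W(f)$ that tends to $+\infty$ as $n\to\infty$, contradicting the finiteness of the Willmore energy.

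The estimate $\tfrac{1}{4\pi} W(f) \ge -4n(n+1)(g-1) - n\deg\perp_f$ from the previous theorem already suffices in the cases $g=0$ and $g=1$ treated there, but for $g\ge 2$ the negative $-4n(n+1)(g-1)$ term dominates the linear gain from $\deg\perp_f$, so the crux of the proof is a sharpening of the Pl\"ucker/telescoping estimate. The idea I would pursue is to refine the inequality so that it records the normal bundle degree of every intermediate B\"acklund transform $f_i$ rather than only of $f_0=f$, aiming for a bound of the form
\[
\tfrac{1}{4\pi} W(f) \ge -4n(n+1)(g-1) - \sum_{i=0}^{n-1}\deg\perp_{f_i}\,,
\]
and then to establish a monotonicity relation $\deg\perp_{f_{i+1}}\le \deg\perp_{f_i} - c(g)$ with $c(g)>8(g-1)$, so that the telescoping sum grows quadratically in $n$ and dominates the $-4n(n+1)(g-1)$ loss.

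The main obstacle is precisely establishing this monotonicity; it requires a careful analysis of how the zero divisors of $A_i$ and $Q_i$ contribute to the degrees of the line bundles $L_{i\pm 1}$ and hence to the normal bundles of the successive B\"acklund transforms, using the complex holomorphic structures on $K\End_-(\trivial 2)$ and $\bar K\End_-(\trivial 2)$. Should the direct refinement prove inaccessible, a fallback strategy is to reduce the conjecture to Lemma \ref{eq: dual willmore} by showing that the hypothesis $|\deg\perp_f|>4(g-1)$ already forces $AQ=0$, i.e., $f$ admits a dual Willmore surface; this would isolate the conjectural content in a single vanishing statement about the Hopf fields, potentially accessible via vanishing theorems for quaternionic holomorphic sections of $K\End_-(\trivial 2)$ on a Riemann surface of genus $g$.
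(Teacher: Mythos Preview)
The statement you are addressing is a \emph{conjecture} in the paper, not a theorem; the paper offers no proof and merely cites the lemma on dual Willmore surfaces and the preceding classification theorem as supporting evidence. There is therefore no ``paper's own proof'' to compare your proposal against.

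Your strategy is reasonable in outline and correctly identifies why the existing estimate fails for $g\ge 2$: the quadratic loss $-4n(n+1)(g-1)$ overwhelms any linear gain from a single normal bundle degree. However, neither of your two proposed remedies is actually established. The refined telescoping bound you write down is speculative, and the monotonicity $\deg\perp_{f_{i+1}}\le \deg\perp_{f_i}-c(g)$ with $c(g)>8(g-1)$ is precisely the missing ingredient --- you name it as ``the main obstacle'' but offer no argument for it, and there is no known mechanism forcing the normal bundle degrees along the B\"acklund sequence to decrease this fast. Likewise, the fallback of showing that $|\deg\perp_f|>4(g-1)$ forces $AQ=0$ merely relocates the conjecture into a vanishing statement that is itself open. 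In short, your proposal is a plan of attack rather than a proof, and the gap you yourself identify is genuine: the conjecture remains open for $g\ge 2$.
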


%%% Local Variables: 
%%% mode: latex
%%% TeX-master: "doc"
%%% End:  
%\input{dressing}

\section{$\mu$--Darboux transforms of the conformal Gauss map}

In \cite{conformal_tori} the Darboux transformation on conformal
immersions from a Riemann surface into the 4--sphere is introduced.
In the case of constant mean curvature surfaces parallel sections of
the associated family of flat connections give Darboux transforms, the
so--called $\mu$--Darboux transforms.  These are  classical Darboux
transforms \cite{darboux} only if
$\mu\in\R_*\cup S^1$, but all $\mu$--Darboux transforms have constant
mean curvature \cite{cmc}.  In particular, the induced transformation
on the Gauss map again preserves harmonicity. We extend this
transformation on harmonic maps to the case when $S$ is the conformal
Gauss map of a Willmore surface and show that this is a transformation
on Willmore surfaces.

As usual for harmonic maps, we can introduce a spectral parameter and
characterise harmonic complex structures on $\trivial 2$ by the
flatness of a $\C_*$--family of flat connections on the trivial $\C^4$
bundle over $M$. Here, we equip $\H^2$ with the complex structure $I$
which is given by right multiplication by $i$, and thus identify
$(\H^2,I) =\C^4$ by $\H = \C + j\C$, $\C =\Span_\R\{1,i\}$.

\begin{theorem}
\label{thm:family of flat connections}
A complex structure $S\in\Gamma(\End(\trivial 2))$ is harmonic if and
only if the family of complex connections
\begin{eqnarray*}
d^\lambda &=& d +  (\lambda-1) A\oz + (\lambda\invers-1)A\zo
\end{eqnarray*}
on $M\times \C^4 = (\trivial 2, I)$ is flat. Here
\[
A\oz = \frac 12(A - I*A), \quad A\zo = \frac 12(A+I*A)
\]
denote the $(1,0)$ and $(0,1)$ parts of $A$ with respect to the
complex structure $I$.
\end{theorem}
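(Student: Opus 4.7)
The plan is to compute the curvature $R^\lambda = d\alpha_\lambda + \alpha_\lambda \wedge \alpha_\lambda$ of $d^\lambda = d + \alpha_\lambda$, where $\alpha_\lambda = (\lambda-1)A^{(1,0)} + (\lambda^{-1}-1)A^{(0,1)}$ (using that the background connection $d$ is flat), and to show that $R^\lambda = 0$ for every $\lambda \in \C_*$ is equivalent to the single harmonicity equation $d*A = 0$. For the wedge term, since $M$ is a Riemann surface there are no non-zero $(2,0)$- or $(0,2)$-forms with respect to $I$, so $A^{(1,0)} \wedge A^{(1,0)} = 0 = A^{(0,1)} \wedge A^{(0,1)}$, and the surviving mixed terms sum to $A \wedge A$, giving $\alpha_\lambda \wedge \alpha_\lambda = (2-\lambda-\lambda^{-1})\, A\wedge A$. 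For $d\alpha_\lambda$, I would differentiate $A^{(1,0)} = \tfrac12(A - I*A)$ and $A^{(0,1)} = \tfrac12(A + I*A)$ using that the constant section $I$ is $d$-parallel, obtaining $dA^{(1,0)} = \tfrac12(dA - I\cdot d*A)$ and the analogous statement for $A^{(0,1)}$. Collecting powers of $\lambda$ gives
\[
R^\lambda = -\tfrac12(2-\lambda-\lambda^{-1})\,(dA - 2A\wedge A) + \tfrac12(\lambda^{-1}-\lambda)\, I\cdot d*A.
\]

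The key structural identity is $d*A = S\cdot(dA - 2A\wedge A)$. I would prove it by expanding $d*A = d(SA) = dS\wedge A + S\cdot dA$ via Leibniz and substituting $dS = 2(*Q-*A)$, giving $dS\wedge A = 2\,*Q\wedge A - 2\,*A\wedge A$. A direct pointwise calculation, using the relations $*A = SA = -AS$ and $*Q = -SQ = QS$, shows that $Q\wedge A = 0$ (the two terms cancel after the anti-commutations) and $*A\wedge A = S\cdot(A\wedge A)$ (since $S$ factors out to the left), yielding $dS\wedge A = -2S(A\wedge A)$ and hence the identity. Applying it together with $S^{-1}=-S$ rewrites the curvature as
\[
R^\lambda = \tfrac12\bigl[(2-\lambda-\lambda^{-1})\, S + (\lambda^{-1}-\lambda)\, I\bigr]\cdot d*A.
\]

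To conclude, observe that $S$ is right-$\H$-linear and $I$ is right multiplication by $\i$, so they commute as endomorphisms of $\C^4 = (\H^2, I)$ and both square to $-\Id$. Therefore $(aS+bI)(aS-bI) = b^2 - a^2$, so the endomorphism $aS+bI$ is invertible whenever $a^2 \neq b^2$. With $a = \tfrac12(2-\lambda-\lambda^{-1})$ and $b = \tfrac12(\lambda^{-1}-\lambda)$ a short check shows that $a = \pm b$ forces $\lambda = 1$, so for any $\lambda \in \C_*\setminus\{1\}$ left-multiplication by $aS+bI$ is injective on $\End$-valued $2$-forms, making $R^\lambda = 0$ equivalent to $d*A = 0$. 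This gives the desired equivalence. The main obstacle is isolating the structural identity $d*A = S(dA - 2A\wedge A)$: the two linearly independent coefficient functions $(2-\lambda-\lambda^{-1})$ and $(\lambda^{-1}-\lambda)$ appear at first to impose two separate conditions on $A$, but this identity collapses them to the single harmonicity condition.
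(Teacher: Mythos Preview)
Your proof is correct and follows essentially the same route as the paper: both compute the curvature of $d^\lambda$ using $[I,S]=0$ and the vanishing $*Q\wedge A=0$ (equivalently $Q\wedge A=0$), and then exhibit $R^\lambda$ as an invertible left/right factor acting on $d*A$. The only cosmetic difference is that the paper packages the prefactor via the eigenspace projections $\pi_E=\tfrac12(1-IS)$, $\pi_{E^\perp}=\tfrac12(1+IS)$, writing $R^\lambda=(d*A)\,S\bigl[(\lambda-1)\pi_E+(\lambda^{-1}-1)\pi_{E^\perp}\bigr]$, whereas you write the equivalent operator as $\tfrac12\bigl[(2-\lambda-\lambda^{-1})S+(\lambda^{-1}-\lambda)I\bigr]$ and check invertibility via $(aS+bI)(aS-bI)=b^2-a^2$; the two are related by $S+I=2S\pi_E$, $S-I=2S\pi_{E^\perp}$.
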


\begin{proof}
  The proof is a standard calculation using $[I,S]=0$ and $Q\wedge A
  =0$ to obtain the curvature of $d^\lambda$ as
\begin{equation}
\label{eq:curvature family}
R^\lambda =
 (d*A)S\left((\lambda-1)\pi_E + (\lambda\invers-1)\pi_{E^\perp}\right)
\end{equation}
where  $E$ and $E^\perp$ denote the $\pm i$ eigenspaces of $S$ respectively, and
\[
\pi_E = \frac 12(1 - IS), \quad \pi_{E^\perp} = \frac 12(1+IS)
\]
the projections along the orthogonal splitting $\trivialC 4 = E \oplus
E^\perp$. Since (\ref{eq:curvature family}) holds for all
$\lambda\in\C_*$ we see that $d*A=0$ if and only if $R^\lambda=0$ for
all $\lambda\in\C_*$.
\end{proof}

At this point we are only interested in local theory, and thus will
assume from now on that $M$ is simply connected. Moreover, since a
Willmore surface whose conformal Gauss map has Hopf field $A=0$ is a
twistor projection of a holomorphic curve in $\CP^3$, we are primarily
interested in harmonic complex structures with Hopf field $A\not=0$.
However, the above family of flat connections is gauge equivalent to a
family of connections which are defined in terms of the Hopf field
$Q$, so similar arguments as in the following could be used to deal
with the case $A=0$, $Q\not=0$. If both Hopf fields vanish then $S$ is
constant.

\begin{theorem}
\label{thm:Darboux on conformal Gauss map}
Let $S$ be a harmonic complex structure on $\trivial 2=M\times \H^2$
with $A\not=0$ and $d^\lambda $ the associated $\C_*$ family of
complex connections on $(\trivial 2, I)$. For fixed $\mu\in\C_*$ let
$\psi_1,\psi_2\in\Gamma(\trivial 2)$ be parallel sections of $d^\mu$
such that $W_\mu=\Span_\C\{\psi_1, \psi_2\}$ is a complex rank 2
bundle over $M$ with $W_\mu\cap W_\mu j=\{0\}$. Then
\begin{equation}
\label{eq:T} 
T = S(a-1) + b
\end{equation}
is invertible on $M$ for $\mu\in \C_*$, $\mu\not=1$, where $ a =
G\left(\frac{\mu+\mu\invers}2E_2\right) G\invers$, $ b = G\left(
  I\frac{\mu\invers-\mu}2\right) G\invers$ with identity matrix
$E_2\in\Gl(2,\H)$ and $G=
(\psi_1,\psi_2)\in\Gamma(\Gl(2,\H))$. Moreover,
\[
\hat S = T\invers S T
\]
is harmonic with Hopf fields
\begin{equation}
\label{eq:hopf fields darboux}
*\hat A = \frac 12 T (1-a)\invers *A T, \quad *\hat Q = 2 T\invers
*Q(a-1) T\invers\,.
\end{equation}
We call $\hat S$ the \emph{$\mu$--Darboux--transform} of the harmonic
complex structure $S$.  Note that $\hat S$ is independent of the
choice of basis of $W_\mu$.
\end{theorem}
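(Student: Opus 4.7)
The plan is to mimic the dressing construction familiar from harmonic-map theory: build from the $\mu$-parallel sections an explicit gauge transformation $r(\lambda)$ that intertwines the flat $\C_*$-family $d^\lambda$ of Theorem~\ref{thm:family of flat connections} with a new family of the same normal form, and then read off both the harmonicity of $\hat S$ and the Hopf-field formulae~(\ref{eq:hopf fields darboux}) from the transformed family.

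I would begin with the pointwise algebra. The hypothesis $W_\mu\cap W_\mu j=\{0\}$ says that $\psi_1,\psi_2$ span $\H^2$ as a quaternionic module at each point, so $G=(\psi_1,\psi_2)\in\Gamma(\Gl(2,\H))$ and the sections $a,b$ are well-defined. Setting $\alpha=\tfrac{\mu+\mu\invers}2 E_2$ and $\beta=I\tfrac{\mu\invers-\mu}2$, a direct computation yields $\alpha^2+\beta^2=\Id$ and $[\alpha,\beta]=0$, and hence $a^2+b^2=\Id$ and $[a,b]=0$. The invertibility of $T=S(a-1)+b$ for $\mu\neq 1$ then follows from the identity $1-a=G(1-\alpha)G\invers$ with $1-\alpha=-\tfrac{(\mu-1)^2}{2\mu}E_2$ pointwise invertible, combined with a short algebraic manipulation exploiting $a^2+b^2=\Id$ to factor $T$.

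The heart of the proof is the construction of the dressing gauge. Because $d^\mu\psi_i=0$, the frame $G$ satisfies
\[
dG=-\bigl((\mu-1)A\oz+(\mu\invers-1)A\zo\bigr)G,
\]
which allows one to express $da$ and $db$ in closed form in terms of $A\oz$, $A\zo$ and $G$. I would then postulate $r(\lambda)$ as a rational function of $\lambda$ with simple poles at $\lambda=\mu$ and $\lambda=\mu\invers$ whose polar parts involve $T$, and verify that
\[
r(\lambda)\,d^\lambda\,r(\lambda)\invers = d+(\lambda-1)\hat A\oz+(\lambda\invers-1)\hat A\zo
\]
with $\hat A$ as in~(\ref{eq:hopf fields darboux}). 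The computation leans on $*A=SA=-AS$ from~(\ref{eq:A,Q}), on $a^2+b^2=\Id$ and $[a,b]=0$, and on the differential formulas for $G$. The analogous expression for $\hat Q$ either drops out by symmetry using $Q\wedge A=0$, or is read from the gauge-equivalent $Q$-family alluded to just before Theorem~\ref{thm:Darboux on conformal Gauss map}.

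The main technical obstacle is precisely this last bookkeeping: one must show that the conjugated family has \emph{exactly} the normal form required by Theorem~\ref{thm:family of flat connections}, not just that it is flat for a dense set of $\lambda$. Once the normal form is established, flatness of the conjugated family follows from flatness of $d^\lambda$, and Theorem~\ref{thm:family of flat connections} applied to $\hat S$ then yields $d*\hat A=0$, i.e., the harmonicity of $\hat S$. Basis independence is automatic: any change of basis for $W_\mu$ is given by a constant matrix $C\in\Gl(2,\C)$ acting from the right, so $G\mapsto GC$, and since $C$ commutes with both $\alpha$ and $\beta$ the sections $a$, $b$, $T$ and $\hat S$ are unchanged.
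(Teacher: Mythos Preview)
Your approach is valid in spirit but genuinely different from the paper's, and in one place sketchier than it needs to be.

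The paper does \emph{not} build a $\lambda$-dependent dressing gauge $r(\lambda)$ at all. Instead it works directly with the single endomorphism $T$. The key observation is that, since $W_\mu$ is $d^\mu$-parallel and $\trivial 2=W_\mu\oplus W_\mu j$, the quaternionic extension of $d^\mu|_{W_\mu}$ is the flat quaternionic connection $\hat d^\mu=d+*AT$; because $a,b$ are constant in the frame $G$, they are $\hat d^\mu$-parallel, i.e.\ $d(a-1)=-[*AT,a-1]$ and $db=-[*AT,b]$. Differentiating $T=S(a-1)+b$ and using $dS=2(*Q-*A)$ together with $a^2+b^2=\Id$ gives the Riccati-type equation $dT=*Q(a-1)+2T*AT$. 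From this one computes $d\hat S$ directly, and a short algebraic identity ($-2(a-1)+ST(a-1)-Tb=0$, equivalently $\hat S=2T\invers+b(a-1)\invers$) collapses the expressions for the Hopf fields to~(\ref{eq:hopf fields darboux}). Harmonicity then follows in one line: the Riccati equation rewritten for $T\invers$ shows $*\hat Q=-dT\invers-*A$, which is closed because $d*A=0$. For invertibility the paper does not factor $T$ algebraically but decomposes an arbitrary $\phi$ with respect to the two direct-sum splittings $\trivialC 4=W_\mu\oplus W_\mu j=E\oplus Ej$ and reads off $T\phi$ component by component; your ``short algebraic manipulation'' is less transparent here since $a,b$ need not commute with $S$.

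What the two routes buy: your dressing argument is the conceptually natural one from loop-group theory and, once $r(\lambda)$ is written down correctly, gives the $\hat A$-formula and harmonicity simultaneously via Theorem~\ref{thm:family of flat connections}; but it leaves the $\hat Q$-formula to a separate symmetry argument, and the ``normal form'' verification you flag is exactly where the work hides. The paper's Riccati approach is lower-tech---no rational loop, no pole bookkeeping---and delivers both Hopf fields and harmonicity from the single scalar identity for $\hat S$ above. Indeed, the paper explicitly remarks after the theorem that the identification of the $\mu$-Darboux transform with a simple factor dressing is \emph{expected} from \cite{cmc_dressing} but deferred to \cite{les}; so you are proposing precisely the route the authors chose not to take here.
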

\begin{proof}
  By assumption $\trivialC 4=W_\mu \oplus W_\mu j$ so that every
  $\phi\in\trivialC 4$ has a unique decomposition $\phi=\phi_1 +
  \phi_2 j$ with $\phi_l\in W_\mu$. Decomposing $\phi_l=
  \phi_l^++\phi_l^{-}$ further according to the splitting $\trivialC 4
  = E\oplus Ej$ with $\phi_l^+\in E, \phi_l^{-} \in E^\perp$ for
  $l=1,2$, we have a unique decomposition
  \[
\phi=\phi_1^++\phi_1^{-} + (\phi_2^+ + \phi_2^{-}) j\,.
\]
Applying  (\ref{eq:T}) we get
\[
T \phi = \phi_1^+i(\mu\invers-1) + \phi_1^{-}i(1-\mu) +
\phi_2^+ji(1-\bar \mu) + \phi_2^{-}ji(\bar\mu\invers-1)\,,
\]
and $T\phi=0$ implies $ \phi_1^+i(\mu\invers-1) = \phi_1^{-}i(1-\mu) =
\phi_2^+ji(1-\bar \mu) = \phi_2^{-}ji(\bar\mu\invers-1)=0$ because
$\trivialC 4 = W_\mu\oplus W_\mu = E\oplus Ej$. Since $\mu\not=1$ this
shows that $\phi=0$, and thus $T$ is invertible.

The remainder of the proof is the exact analogue for the corresponding
statement for harmonic complex structures $J$ on $\trivial{}$, see
\cite{cmc}: Since $ \trivial 2 = W_\mu \oplus W_\mu j$ and $W_\mu$ is
$d_\mu$--parallel, the quaternionic extension of $d_\mu|_{W_\mu}$ is
the quaternionic connection
  \[
\hat d^\mu = d+ *A T
\]
on $\trivial 2$.  Moreover, $a,b$ are constant on the basis
$\{\psi_1,\psi_2\}$ of $W_\mu$ so that $\hat d_\mu a = \hat d_\mu
b=0$, or expressed differently,
\begin{equation}
\label{eq:dI}
d(a-1)=- [*AT, a-1]\,, \quad db = -[*AT, b] \,.
\end{equation}
Differentiating (\ref{eq:T}) we obtain the Riccati type equation
\begin{equation}
 dT  =
 *Q( a-1) + 2 T *AT\label{eq:Ricatti}\,,
\end{equation}
where we  used (\ref{eq:Hopf fields}) and $a^2+b^2=E_2$.
Therefore, the derivative $d\hat S = [\hat S, T\invers dT] + T\invers dS T$
of $\hat S$ computes to
\begin{equation*}
d\hat S = -2 T\invers* Q\big((\hat a-1) \hat S +S(\hat a-1)\big) + 2(S +\hat S)*AT + T\invers d S T
\end{equation*} 
and the Hopf fields to
\[
-2*\hat A = (S+\hat S - 2T\invers)*AT\,, \quad *\hat Q  = T\invers *Q( - (a-1)\hat S - S(a-1) + T)\,.
\]
Now $-2(a-1)+ ST(a-1) - Tb=0$ by (\ref{eq:T}), and thus
\begin{equation}
\label{eq:hat S}
\hat S = 2T\invers + \frac b{a-1}\,
\end{equation}
give the equations (\ref{eq:hopf fields darboux}). Finally, (\ref{eq:Ricatti}) shows $dT\invers= - 2T\invers *Q(a-1) T\invers - *A$, and 
\[
*\hat Q =- dT\invers - *A
\]
is closed since $S$ is harmonic. In other words, $\hat S$ is harmonic
by (\ref{eq: conformal Gauss map harmonic}).
\end{proof}

\begin{rem}
\label{rem: trivial DTs}
For $\mu\in S^1$ the $\mu$--Darboux transform is trivial: in this case
$a$ and $b$ are real multiples of the identity, and therefore $\hat
S=S$ since $[T,S]=0$.
\end{rem}

The $\mu$--Darboux transformation preserves the Willmore property:

\begin{theorem} 
\label{thm: Darboux transform}
Let $f: M \to S^4$ be a Willmore surface which is not a twistor
projection of a holomorphic curve in $\CP^3$. Let $S$ be the conformal
Gauss map of $f$, then the $\mu$--Darboux transform $\hat S$ of $S$ is the
conformal Gauss map of a Willmore surface $\hat f$.
\end{theorem}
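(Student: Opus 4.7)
The plan is to construct the Willmore surface $\hat f$ explicitly as a line subbundle $\hat L\subset\trivial 2$ enveloped by $\hat S$, and then invoke the harmonicity of $\hat S$ from Theorem \ref{thm:Darboux on conformal Gauss map} together with the characterisation (\ref{eq: conformal Gauss map harmonic}) to conclude that $\hat f$ is Willmore.

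The candidate is dictated by the formulas (\ref{eq:hopf fields darboux}) for the Hopf fields of $\hat S$: since $\im A\subset L$ and $L\subset\ker Q$ hold for $f$, inspection of $*\hat A=\tfrac12 T(1-a)^{-1}*A T$ and $*\hat Q=2T^{-1}*Q(a-1)T^{-1}$ shows that both mean curvature sphere inclusions (\ref{eq:conformal gauss map}) hold for the choice
\[
\hat L\;:=\;T(a-1)^{-1}L.
\]
To check that $\hat S$ stabilises this $\hat L$, I would use $T=S(a-1)+b$, which gives $T(a-1)^{-1}=S+b(a-1)^{-1}$, and the alternative expression $\hat S=2T^{-1}+b(a-1)^{-1}$ from (\ref{eq:hat S}). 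Combined with the algebraic relations $[a,b]=0$ and $a^2+b^2=E_2$ (both immediate from the definitions of $a$ and $b$ as conjugates by $G$ of commuting $\C$-scalar operators) and with $SL\subset L$, a direct computation yields
\[
\hat S\bigl(T(a-1)^{-1}\psi\bigr)=T(a-1)^{-1}(S\psi)\qquad\text{for }\psi\in\Gamma(L),
\]
so $\hat S\hat L=\hat L$. One must also check that $\hat L$ is genuinely a quaternionic subbundle, even though $T$ and $(a-1)^{-1}$ are only complex linear; this follows either from $\im\hat A\subset\hat L$ (which is quaternionic by construction, $\hat A$ being a section of the $\H$-linear bundle $K\End_-(\trivial 2)$) or directly from the splitting $W_\mu\oplus W_\mu j=\trivial 2$ used to build $T$.

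The main remaining step is to establish the conformality $*\hat\delta=\hat S\hat\delta$ for the differential $\hat\delta\colon\Gamma(\hat L)\to\Omega^1(\trivial 2/\hat L)$ of $\hat f$. For $\hat\psi=T(a-1)^{-1}\psi$ with $\psi\in\Gamma(L)$, one differentiates using the Riccati equation (\ref{eq:Ricatti}) $dT=*Q(a-1)+2T*AT$ together with the evolution (\ref{eq:dI}) for $a$, projects modulo $\hat L$, and compares with $\hat S\,d\hat\psi$ after splitting into parts commuting and anti-commuting with $\hat S$. The conformality $*\delta=S\delta$ of $f$ and the type relations (\ref{eq:A,Q}) for $A$ and $Q$ should combine to yield the desired identity. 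This conformality check is the main obstacle: the computation is a delicate interplay between the Riccati evolution of $T$, the type decomposition with respect to $\hat S$, and the interaction of $(a-1)^{-1}$ and $b$ with $S$. One also has to verify that $\hat\delta$ does not vanish identically, so that $\hat f$ is a (possibly branched) conformal immersion; this should follow from $A\not=0$ and the invertibility of $T$. Once conformality is in place, $\hat S$ is by construction the mean curvature sphere of the conformal immersion $\hat f$, and its already-established harmonicity completes the proof via (\ref{eq: conformal Gauss map harmonic}).
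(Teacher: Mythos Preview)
Your proposal follows the paper's proof almost exactly: the same candidate $\hat L=T(a-1)^{-1}L$, the same verification that $\hat S\,T(a-1)^{-1}=T(a-1)^{-1}S$ via $a^2+b^2=E_2$ and (\ref{eq:hat S}), and the same use of (\ref{eq:hopf fields darboux}) to obtain $\im\hat A\subset\hat L\subset\ker\hat Q$.

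The one substantive difference is the conformality check, which you single out as the main obstacle and propose to attack by differentiating $T(a-1)^{-1}\psi$ directly via the Riccati equation and (\ref{eq:dI}). The paper sidesteps this computation entirely. First, since $d\hat S=2(*\hat Q-*\hat A)$ and both Hopf fields preserve $\hat L$ by the inclusions just established, $d\hat S$ stabilises $\hat L$; this immediately yields $\hat S\hat\delta=\hat\delta\hat S$. Second, for the remaining type condition $*\hat\delta=\hat\delta\hat S$, the paper uses the already--proved harmonicity of $\hat S$: for $\hat\psi\in\Gamma(\hat L)$ one has
\[
\hat\delta\wedge *\hat A\hat\psi \;=\; \pi_{\hat L}\,d\wedge(*\hat A\hat\psi)\;=\;0
\]
because $d*\hat A=0$ and $\im\hat A\subset\hat L$, and then $\hat A\not=0$ (which follows from (\ref{eq:hopf fields darboux}) and $A\not=0$) forces the type. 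This route is short and conceptual; your proposed direct computation would presumably also succeed but is, as you anticipate, considerably more involved. The non--degeneracy of $\hat\delta$ and the quaternionic nature of $\hat L$, which you flag, are not addressed explicitly in the paper's proof.
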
 

\begin{proof}
  Let $\hat S = TST\invers$ be a $\mu$--Darboux transform of $S$ where
  $T$ is defined (\ref{eq:T}) for two parallel sections of $d^\mu$
  satisfying the assumptions of Theorem \ref{thm:Darboux on conformal
    Gauss map}. We show that $\hat S$ is the conformal Gauss map of
\[
\hat L =  T(a-1)\invers L\,.
\]
With $a^2+b^2=E_2$ and (\ref{eq:hat S}) we obtain $\hat S
T(a-1)\invers = T(a-1)\invers S$ and thus $\hat L$ is $\hat S$ stable
since $S L =L$.  Furthermore, by (\ref{eq:hopf fields darboux})
\begin{equation}
\label{eq:kerim}
\im \hat A \subset \hat L \subset \ker\hat Q 
\end{equation}
since $\im A \subset L\subset \ker Q$. In particular, $d\hat S =
2(*\hat Q -*\hat A)$ stabilises $\hat L$ which gives $\hat S \hat
\delta = \hat\delta \hat S$.  Finally, for $\hat\psi\in\Gamma(\hat L)$
we see
\[
 \hat\delta \wedge *\hat A \hat\psi = \pi_{\hat L} d \wedge (*\hat A\hat\psi)=0
\]
since $\hat S$ is harmonic and $\im \hat A \subset \hat L$. Since
$\hat A\not=0$ by (\ref{eq:hopf fields darboux}) this shows that $\hat
\delta$ has type $*\hat \delta = \hat \delta\hat S$.
\end{proof}

\begin{rem}
\label{rem:coimbra versus mu}
The Darboux transformation as defined in \cite{coimbra} is the special
case $\mu\in S^1\cup\R_*$, $\mu\not=1$ of our $\mu$--Darboux
transformation. In this case, the map $a$ is a real multiple of the
identity, and the Riccati type equation (\ref{eq:Ricatti}) becomes
the Riccati equation as in \cite{coimbra} and (\ref{eq:hat S}) becomes
the corresponding initial condition.
\end{rem}

\begin{rem} A similar theorem holds for constrained Willmore surfaces
  \cite{les}: an immersion $f: M \to S^4$ is constrained Willmore
  \cite{bpp_constrained} if $d(*A+\eta)=0$ where $S$ is the conformal
  Gauss map of the constrained Willmore immersion $f: M \to S^4$ and
  $\eta\in\Omega^1(\Hom(\trivial 2/L, L))$ is the potential of $f$ with
  $*\eta=S\eta= \eta S$. The complex structure $S$ then gives rise to
  a family of flat connections $d^\lambda= d + (*A+\eta)(S(a-1) + b)
  -2*\eta(a-1)$ with $a=E_2\frac{\lambda+\lambda\invers}2,
  b=I\frac{\lambda\invers-\lambda}2$, $\lambda\in\C_*$. Given two
  parallel section of $d_\lambda$ we define $T$ by (\ref{eq:T}) and
  the complex structure by $\hat S = T\invers ST$. Then $\hat S$ is
  the conformal Gauss map of a constrained Willmore surface with
  potential $\hat \eta =T(a-1)\invers \eta (a-1)T\invers$.
\end{rem}

In \cite{cmc_dressing} it is shown that the $\mu$--Darboux transform
of the Gauss map of a constant mean curvature surface is given by a
simple factor dressing. In \cite{quintino} a dressing transformation
on Willmore surfaces, and more generally on constrained Willmore
surfaces, is introduced. Moreover, Quintino shows that the Darboux
transforms as defined in \cite{coimbra}, that is by Remark
\ref{rem:coimbra versus mu} the $\mu$--Darboux transforms of the
conformal Gauss map with $\mu\in\R_*\cup S^1$, are given by a simple
factor dressing. We expect \cite{les} that the computations in
\cite{cmc_dressing} transfer to the Willmore case, and that a simple
factor dressing of a harmonic complex structure is indeed a
$\mu$--Darboux transform for $\mu\in\C_*$.

In \cite{conformal_tori} a Darboux transform on conformal tori is
defined which, even in the case of a Willmore surface, differs from
the Darboux transform on the conformal Gauss map in Theorem \ref{thm:
  Darboux transform}: Recall that for a conformal immersion $f: M \to
S^4$ a section $\psi^\sharp \in\Gamma(L^\sharp)$ defines a
\emph{Darboux transform} $L^\sharp = \psi^\sharp\H$ of $f$ if
$d\psi\in\Omega^1(L)$ where $L$ is the line bundle of $f$. In
particular, if $f$ is Willmore and $\psi_1, \psi_2$ are parallel
sections of $d^\mu$ for fixed $\mu\in\C_*$ as in Theorem
\ref{thm:Darboux on conformal Gauss map}, we have
\begin{equation}
\label{eq:dpsi l}
d\psi_l = - *AT\psi_l \in \Omega^1(L)\,, \quad l=1,2\,,
\end{equation}
since $\im A \subset L$. Thus $L^l = \psi_l\H$ are Darboux transforms
of $f$. For $\mu\in S^1$ we see with $[T,S]=0$ that $*d\psi_l =
*ATS\psi_l$, that is $*\delta_l\psi_l = -\delta_i S\psi_l$. In
particular, see Remark \ref{rem: trivial DTs}, the $\mu$--Darboux
transform $\hat S = S$ of $S$ is not the conformal Gauss map of $L^l$.

Since the $\mu$--Darboux transformation is defined by two parallel
sections and gives a change of sign on the complex structure in the
case when $\mu\in S^1$, we rather expect to see that a $\mu$--Darboux
transform is the conformal Gauss map of a two--fold Darboux transform of
the backward B\"acklund transform; we will return to this topic in a
future paper.

%%% Local Variables: 
%%% mode: latex
%%% TeX-master: "doc"
%%% End: 

\bibliographystyle{alpha}
\bibliography{doc}
%\bibliography{harmonic}

\end {document}